\newtheorem{definition}{Definition}[section]
\newtheorem{corollary}{Corollary}[section]
\newtheorem{theorem}{Theorem}[section]
\newtheorem{example}{Example}[section]
\begin{document}
\title[On the stability of the Bishop's property ($\beta$)]
{On the stability of the Bishop's property($\beta$) under compact
perturbations}
\author[S.Mecheri ]{Salah Mecheri}
\address{Department of Mathematics\\
Faculty of Science and Informatics\\
El Bachir Ibrahimi University, Bordj Bou Arreridj, Algeria.}
 \email{salahmecheri20@gmail.com}
\dedicatory{} \keywords{Linear operator, Decomposability, SVEP,
Bishop's property, invariant subspace}
\subjclass[2000]{Primary 47B47, 47A30, 47B20;\\
\hspace{4mm} Secondary 47A15.}
\begin{abstract}
Let $B(X)$ be the Banach algebra of all bounded linear operators
acting on a Banach space $X$. Are sums and products of commuting
decomposable operators on Banach spaces decomposable? This is one
of the most important open problems in the local spectral theory
of operators on Banach spaces. Similarly, it is not known if local
spectral properties such as the single valued extension property,
Dunfords property $(C)$, Bishops property $(\beta)$, or the
decomposition property ($\delta$) are preserved under sums and
products of commuting operators. But it is shown by Bourhim and
Muller that the single-valued extension property is not preserved
under the sums and products of commuting operators. On the
positive side, Sun proved that the sum and the product of two
commuting operators with Dunfords property $(C)$ have the
single-valued extension property. Very recently, Aiena and Muller
showed that the (localized) single-valued extension property is
stable under com- muting Riesz perturbations. In this paper, we
show that Bishops property ($\beta$), the decomposition property
($\delta$), or decomposable operators $T\in B(X)$ are stable under
quasinilpotent, compact, and algebraic commuting perturbations.
\end{abstract}
 \maketitle
\section{Introduction}
Let $B(X)$ be the Banach algebra of all bounded linear operators
acting on a Banach space $X$. An operator $T \in B(X)$ is said to
have the single valued extension property at $\lambda_{0}\in
\mathbb{C}$ (abbreviated SVEP at $\lambda_{0})$, if for every open
disc $D$ of $\lambda_{0}$, the only analytic function $f : D
\Rightarrow X$ which satisfies the equation $(\lambda
I-T)f(\lambda) = 0$ for all $\lambda\in D$ is the function
$f\equiv 0$. An operator $T \in B(X)$ is said to have SVEP if T
has SVEP at every point $\lambda\in\mathbb{C}$. The study of
operators satisfying Bishops property $(\beta)$ is of significant
interest and is currently being done by a number of mathematicians
around the world \cite{17,18,19,20,21}. Let $D(\lambda; v)$ be the
open disc centred at $\lambda\in\mathbb{C}$ with radius $v
> 0$, and let $O(U;X)$ denote the Fr\'echet algebra of all
$X$-valued functions on the open subset of $U\in\mathbb{C}$
endowed with the uniform convergence on compact subsets of $U$. An
operator $T\in B(X)$ has Bishop's property $(\beta)$ at
$\lambda_{0}$ if there exists $v>0$ such that for every open
subset $U \subset D(\lambda_{0}; v)$ and for any sequence
$\{f_{n}\}^{\infty}_{n=1} \subset O(U;X)$, $\lim_{n\rightarrow
\infty}(S-\lambda I)f_{n}(\lambda) = 0$ in $O(U;X)$ implies
$\lim_{n\rightarrow \infty} f_{n}(\lambda) = 0$ in $O(U;X)$. An
operator $T\in B(X)$ is said to have the decomposition property
$(\delta)$ if $T^{ *}$ satisfies Bishops property $(\beta)$, where
$T^{*}$ is the dual operator of $T$. In \cite{6}, Foias showed
that every decomposable operator (and therefore spectral operators
in the sense of Dunford, all generalized scalar operators in the
sense of Colojoara and Foias \cite{6}, compact operators, and
unitary, normal, and self-adjoint operators on a Hilbert space)
has decomposition property ($\delta$). It is well known that $T$
is decomposable if and only if $T$ satisfies both Bishop's
property $(\beta)$ and decomposition property $\delta)$. The left
shift operator $L$ on $l_{2}(N)$ has the decomposition property
$(\delta)$. Indeed, since the right shift operator $R$ on
$l_{2}(N)$ is subnormal as the restriction of the bilateral right
$R$ shift on $l_{2}(Z)$. Clearly, $R$ has the Bishop's property
$(\beta)$. Since $L$ is the adjoint of $R$, it follows that $L$
has the decomposition property ($delta$). The left shift operator
$L$ on $l_{2}(N)$ is an example of a bounded linear operator that
has decomposition property $\delta)$, but whose adjoint does not.
This shows that the decomposition property $\delta)$ is not
preserved under the adjoint operation. The natural related
operator in the context of the spectral theory is the restriction
operator. We give an example of an operator $T$ and a
$T$-invariant subspace $M$ such that $T$ has decomposition
property ($\delta$); but $T|_{M}$ does not. Let $T$ be the right
bilateral shift operator on $l_{2}(\mathbb{Z})$; and let $Y :=
span\{e_{i} : i = 1; 2\}^{-}$, where $\{e_{i} : i\in \mathbb{Z}\}$
is the usual orthonormal basis for $l_{2}(\mathbb{Z})$. Now T is
unitary and so certainly has decomposition property $(\delta)$,
but $T|_{M}$ is isomorphic to the right shift on $l_{2}(N)$, and
hence does not have decomposition property $(\delta)$. In
particular, the single valued extension property of operators was
first introduced by N. Dunford to investigate the class of
spectral operators which is another important generalization of
normal operators (see \cite{8}). In the local spectral theory, for
given an operator T on a complex Banach space $X$ and a vector $x
\in X$, one is often interested in the existence and the
uniqueness of analytic solution $f(.) : U \rightarrow X$ of the
local resolvent equation $$(T -\lambda)f(\lambda) = x$$ on a
suitable open subset $U$ of $\mathbb{C}$. Obviously, if $T$ has
SVEP, then the existence of analytic solution to any local
resolvent equation (related to $T$) implies the uniqueness of its
analytic solution. The SVEP is possessed by many important classes
of operators such as hyponormal operators and decomposable
operators \cite{7,14}. To emphasize the significance of Bishop's
property $(\beta)$; we mention the important connections to sheaf
theory and the spectral theory of several commuting operators from
the monograph by Eschmeier and Putinar \cite{9}. There are also
interesting applications to invariant subspaces \cite{9}, harmonic
analysis \cite{10}, and the theory of automatic continuity
\cite{15}. Unfortunately, but perhaps not surprisingly, the direct
verification of property ($\beta$) in con- crete cases tends to be
a difficult task. It is therefore desirable to have sufficient
conditions for property ($\beta$) which are easier to handle. Are
sums and products of commuting decomposable operators on Banach
spaces decomposable? This is one of the most important open
problems in the local spectral theory of operators on Banach
spaces. Similarly, it is not known if local spectral properties
such as Dunfords property ($C$), Bishops property $(\beta)$, or
the decomposition property $(\delta)$ are preserved under sums and
products of commuting operators. But it is shown in \cite{3} that
the single-valued extension property is not pre- served under the
sums and products of commuting operators; see also \cite{1}. On
the positive side, Sun \cite{23} proved that the sum and the
product of two commuting operators with Dunfords property (C) have
the single-valued extension property. Very recently, Aiena and
Muller \cite{2} showed that the (localized) single-valued
extension property is stable under commuting Riesz perturbations.
In this paper, we show that Bishops property ($\beta$), the
decomposition property ($\delta$), or decomposable operators are
stable under quasinilpotent, compact, and algebraic commuting
perturbations.

In \cite{12}, J.W. Helton initiated the study of operators $T\in
B(H)$ which satisfy an identity of the form
$$T^{*m}-\binom{m}{1}T^{*m-1}T+...+(-1)^{m}T^{m}=0.\eqno (1)$$
Further study of this class of operators is needed. Let $R,S\in
B(X)$ and let $C(R,S)^{k}(I): X\rightarrow X$ be defined by
$C(R,S)(A)= RA- AS$. Then
$$C(R, S)^{k}(I)= \sum^{k}_{j=0}(-1)^{k-j}\binom{k}{j}R^{j}S^{k-j}. \eqno (2)$$
Thus we have the following definition
\begin{definition}Let $R\in B(X)$. If there is an integer $k\geq
1$ such that an operator $S$ satisfies $C(R,S)^{k}(I)=0$, we say
that $S$ belongs to the Helton class of $R$. We denote this by
$S\in \mbox{Helton}_{k}(R).$
\end{definition}
We remark that $C(R, S)^{k}(I)=0$ does not imply $C(S,R)^{k}(I)=0$
in general.
\begin{example} \begin{displaymath}
S= \left( \begin{array}{ccccccc}
 0& A  &  B \\
 0& 0&0\\
 0&0&0
\end{array} \right),\,R= \left( \begin{array}{ccccccc}
 0& 0  &  C \\
 0& 0&D\\
 0&0&0
\end{array} \right),
\end{displaymath}
where $A,B,C$ and $D$ are bounded linear operators defined on $H$.
Then it is easy to calculate that $C(R,S)^{2}(I)=0$, but
$C(S,R){2}(I)\neq 0$.
\end{example}
\section{Main Results}
We will consider the Helton class of an operator which has
Bishop's property ($\beta$).
\begin{theorem} Let $R\in B(X$) has Bishop's property $(\beta)$ at $\lambda_{0}$. If
$S\in Helton_{k}(R)$, then $S$ has Bishop's property $(\beta)$ at
$\lambda_{0}$.
\end{theorem}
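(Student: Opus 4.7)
The plan is to exploit the polynomial identity $C(R,S)^{k}(I)=0$ in the translated form $C(R-\lambda,S-\lambda)^{k}(I)=0$, which holds for every $\lambda\in\mathbb{C}$ because replacing $R,S$ by $R-\lambda,S-\lambda$ leaves the commutator $C(R,S)(\cdot)$ invariant. Written out via (2), this identity reads
\[
\sum_{j=0}^{k}(-1)^{k-j}\binom{k}{j}(R-\lambda)^{j}(S-\lambda)^{k-j}=0.
\]
Isolating the $j=0$ summand and factoring a single $(R-\lambda)$ on the left of the remaining terms yields a representation
\[
(S-\lambda)^{k}=(R-\lambda)\,Q(\lambda),\qquad Q(\lambda)=\sum_{j=1}^{k}c_{j}(R-\lambda)^{j-1}(S-\lambda)^{k-j},
\]
with explicit binomial coefficients $c_{j}$; in particular $c_{k}=(-1)^{k+1}\neq 0$, so the top-degree term of $Q(\lambda)$ in $(R-\lambda)$ is a nonzero multiple of $(R-\lambda)^{k-1}$.

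Let $v>0$ be the radius given by Bishop's property $(\beta)$ of $R$ at $\lambda_{0}$, fix an open $U\subset D(\lambda_{0};v)$, take a sequence $\{f_{n}\}\subset O(U;X)$ with $(S-\lambda I)f_{n}(\lambda)\to 0$ in $O(U;X)$, and observe that multiplication by the operator-valued analytic coefficients of any fixed polynomial in $R-\lambda,S-\lambda$ is continuous on $O(U;X)$ (being locally uniform multiplication by uniformly bounded operators on compact subsets of $U$). Hence $(S-\lambda)^{k}f_{n}(\lambda)\to 0$ in $O(U;X)$, so by the factorization above, $(R-\lambda)\bigl[Q(\lambda)f_{n}(\lambda)\bigr]\to 0$ in $O(U;X)$. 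Invoking property $(\beta)$ of $R$ at $\lambda_{0}$ yields $Q(\lambda)f_{n}(\lambda)\to 0$ in $O(U;X)$.

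Now expand $Q(\lambda)f_{n}(\lambda)$ term by term. For every $j$ with $1\le j\le k-1$ we have $k-j\ge 1$, and
\[
(R-\lambda)^{j-1}(S-\lambda)^{k-j}f_{n}(\lambda)=(R-\lambda)^{j-1}(S-\lambda)^{k-j-1}\bigl[(S-\lambda)f_{n}(\lambda)\bigr]\to 0
\]
in $O(U;X)$ by continuity of the operator-valued multiplication. Only the $j=k$ summand survives, giving $(-1)^{k+1}(R-\lambda)^{k-1}f_{n}(\lambda)\to 0$, i.e. $(R-\lambda)^{k-1}f_{n}(\lambda)\to 0$ in $O(U;X)$.

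The conclusion then follows by a descending induction: applying property $(\beta)$ of $R$ at $\lambda_{0}$ to the sequence $g_{n}(\lambda)=(R-\lambda)^{k-2}f_{n}(\lambda)$, which satisfies $(R-\lambda)g_{n}(\lambda)\to 0$, we obtain $(R-\lambda)^{k-2}f_{n}(\lambda)\to 0$; iterating $k-1$ times (all on the fixed disc $D(\lambda_{0};v)$, so the same $v$ works throughout) gives $f_{n}(\lambda)\to 0$ in $O(U;X)$, proving that $S$ has property $(\beta)$ at $\lambda_{0}$ with the same radius $v$. The one place that needs real care is verifying that the coefficient of the leading $(R-\lambda)^{k-1}$ term in $Q(\lambda)$ is nonzero and that the $O(U;X)$-continuity of multiplication by $(R-\lambda)^{j-1}$ and $(S-\lambda)^{i}$ is used uniformly on compacta; modulo this bookkeeping the argument is a clean factor-and-iterate scheme driven entirely by the Helton identity and property $(\beta)$ for $R$.
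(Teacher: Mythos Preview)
Your proof is correct and follows essentially the same approach as the paper: both rewrite the Helton identity $C(R,S)^{k}(I)=0$ in the translated form $\sum_{j=0}^{k}(-1)^{k-j}\binom{k}{j}(R-\lambda)^{j}(S-\lambda)^{k-j}=0$, use that every term with $j<k$ carries a factor $(S-\lambda)$ and hence vanishes on $f_{n}$, and then descend from $(R-\lambda)^{k}f_{n}\to 0$ to $f_{n}\to 0$ by iterated applications of property~$(\beta)$ for $R$. Your detour through the factorization $(S-\lambda)^{k}=(R-\lambda)Q(\lambda)$ and a preliminary use of $(\beta)$ to reach $(R-\lambda)^{k-1}f_{n}\to 0$ is harmless but unnecessary---the paper isolates the $j=k$ term directly to obtain $(R-\lambda)^{k}f_{n}\to 0$ in one step.
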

\begin{proof} Suppose that $R$ has Bishop's property $(\beta)$). Assume that there
exists $v > 0$ such that for every open subset $U \subset
D(\lambda_{0}; v)$ and for any sequence $\{f_{n}\}^{\infty}_{ n=1}
\subset O(U;X)$, $\lim_{n\rightarrow \infty}(\lambda I-
S)f_{n}(\lambda) = 0$ in $O(U;X)$. Since the terms of the below
equation equal to zero when $j + s \not= r$, it suffices to
consider only the case of $j + s = r$ Then we have Let
$f_{n}:U\rightarrow X$ be any sequence of analytic function on $U$
(any open set) such that $(\lambda I- S)f_{n}\lambda\rightarrow 0$
as $n\rightarrow 0$ uniformly on all compact subsets of $U$. Since
the terms of the bellow equation equal to zero when $j+s\neq r$,
it suffices to consider only the case of $j+s=r$. Then we have
$$\sum^{k}_{j=0}(-1)^{k-j}\binom{k}{j}(R-\lambda)^{j}(\lambda-S)^{k-j}$$
$$=\sum^{k}_{j=0}\sum^{j}_{r=0}\sum^{k-j}_{s=0}(-1)^{k-(s+r)}\binom{k}{j}\binom{j}{r}\binom{k-j}{s}R^{j}S^{k-j}$$
$$=\sum^{k}_{j=0}(-1)^{k-j}\binom{k}{j}R^{j}S^{k-j}.$$
Thus, we have
$$\sum^{k-j}_{j=0}\binom{k}{j}R^{j}S^{k-j}-(R-\lambda)^{k}f_{n}(\lambda)$$
$$=\sum^{k}_{j=0}\binom{k}{j}(R-\lambda)^{j}(\lambda -S)^{k-j}-(R-\lambda)^{k}f_{n}(\lambda)$$
$$=\sum^{k-1}_{j=0}\binom{k}{j}(R-\lambda)^{j}(\lambda -S)^{k-j}f_{n}(\lambda)$$
$$=\sum^{k-1}_{j=0}\binom{k}{j}(R-\lambda)^{j}(\lambda -S)^{k-j-1}(\lambda-S)f_{n}(\lambda)\rightarrow 0$$
as $n\rightarrow\infty$ uniformly on all compact subsets of $U$.
Since
$$\sum^{k}_{j=0}(-1)^{k-j}\binom{k}{j}R^{j}S^{k-j}=0,$$
we have $(R-\lambda)^{k}f_{n}(\lambda)\rightarrow 0$ as
$n\rightarrow \infty$ uniformly on all compact subsets of $U$.
Since $R$ has the Bishop's property ($\beta$),
$(R-\lambda)^{k-1}f_{n}(\lambda)\rightarrow 0$ as $n\rightarrow
\infty$ uniformly on all compact subsets of $U$. By induction we
get that $f_{n}(\lambda)\rightarrow 0$ as $n\rightarrow \infty$
uniformly on all compact subsets of $U$. Hence, $S$ has the
Bishop's property $\beta$.
\end{proof}
If $S; T\in B(X)$ have Bishop's property $(\beta)$ at
$\lambda_{0}$, does $S + T$ have Bishop's property $(\beta)$ at
$\lambda_{0}$? So far we do not know the answer about this
question. In the following theorem we study a special case of this
question.
\begin{theorem} If $R$ has Bishop's property $(\beta)$ at $\lambda_{0}$, $S \in
Helton_{k}(R)$ and $RS = SR,$ then T = R + S has Bishop's property
$(\beta)$ at $\lambda_{0}$.
\end{theorem}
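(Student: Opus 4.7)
The plan is to reduce this theorem directly to Theorem 2.1 applied to the pair $(2R,T)$.

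First, I would exploit the commutativity hypothesis. Because $RS=SR$, the Helton identity
$$\sum_{j=0}^{k}(-1)^{k-j}\binom{k}{j}R^{j}S^{k-j}=0$$
collapses by the binomial theorem to $(R-S)^{k}=0$; so $N:=R-S$ is a nilpotent operator of index at most $k$ that commutes with both $R$ and $S$. Substituting $S=T-R$ rewrites this as $(2R-T)^{k}=0$. Because $T=R+S$ commutes with $R$, the Helton commutator $C(2R,T)^{k}(I)$ is exactly $(2R-T)^{k}$, so $T\in\mbox{Helton}_{k}(2R)$.

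With this in hand I would invoke Theorem 2.1 on the pair $(2R,T)$: if $2R$ has Bishop's property $(\beta)$ at $\lambda_{0}$, then so does $T$. Alternatively, a self-contained repetition of the proof of Theorem 2.1 would start from $\{f_{n}\}\subset O(U;X)$ on $U\subset D(\lambda_{0};v)$ with $(\lambda I-T)f_{n}(\lambda)\to0$, expand
$$0=(2R-T)^{k}=\sum_{j=0}^{k}\binom{k}{j}(2R-\lambda I)^{j}(\lambda I-T)^{k-j}$$
(valid since $R$ and $T$ commute), isolate the $j=k$ term, factor $(\lambda I-T)$ out of each remaining summand, and conclude $(2R-\lambda I)^{k}f_{n}(\lambda)\to0$, i.e.\ $2^{k}\bigl(R-\tfrac{\lambda}{2}I\bigr)^{k}f_{n}(\lambda)\to0$. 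The induction from the end of Theorem 2.1's proof then peels off the powers of $R-\tfrac{\lambda}{2}I$ via $(\beta)$ of $R$, leaving $f_{n}(\lambda)\to0$ and hence $T$ has $(\beta)$ at $\lambda_{0}$.

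The main obstacle I anticipate is the factor-of-two rescaling $\lambda\mapsto\lambda/2$ that enters at the final step: both routes genuinely use Bishop's property $(\beta)$ of $R$ near $\lambda_{0}/2$ rather than $\lambda_{0}$, since $2R$ having $(\beta)$ at $\mu$ is equivalent to $R$ having $(\beta)$ at $\mu/2$. This is a real subtlety in the local reading of the hypothesis, and one should either interpret $(\beta)$ globally (the way Theorem 2.1 is typically invoked in this paper) or shrink $v$ and rescale the disc $D(\lambda_{0};v)$ so that the required local condition on $R$ is in force. Once that bookkeeping is handled the argument is purely algebraic, using only commutativity, the binomial theorem, and the Helton cancellation, and encounters no further difficulty.
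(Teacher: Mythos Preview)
Your approach is exactly the paper's: show $T\in\mbox{Helton}_{k}(2R)$ via $(2R-T)^{k}=(R-S)^{k}=0$ (using commutativity) and then invoke Theorem~2.1 with $2R$ in place of $R$. The rescaling subtlety you flag is real and is glossed over in the paper's proof as well---the argument genuinely requires $(\beta)$ for $R$ at $\lambda_{0}/2$, not $\lambda_{0}$ (indeed, taking $S=R$ shows the localized statement can fail), so only the global reading is fully justified; note that your proposed ``shrink $v$'' does not repair the local version, since shrinking the radius does not move the center from $\lambda_{0}$ to $\lambda_{0}/2$.
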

\begin{proof}
It is easily seen that $C(2R; S)^{k}(I) = C(R; S)^{k}(I) = 0$.
Hence $T = R + S \in  Helton_{k}(2R)$. Since $2R$ has Bishop's
property ($\beta$), it follows from Theorem 2.1 that $T$ has
Bishop's property $(\beta)$.
\end{proof}

Note that if there exists an integer $k\in \mathbb{N}$ for which
$C(R; S)^{k}(I) = C(S;R)^{k}(I) = 0$, then the operators $S$ and
$R$ are said to be nilpotent equivalent. For $R$; $S\in B(X)$ with
$RS = SR$, it is easily seen that $C(R; S^{k}(I) = (R- S)^{}$ for
all $k \in \mathbb{N}$. Thus $R$ and $S$ are nilpotent equivalent
precisely when $R - S$ is nilpotent.

\begin{theorem} If $S; T \in B(X)$ are nilpotent equivalent, then $T$ has Bishop's
property ($\beta$) at $\lambda_{0}$ if and only if $S$ has
Bishop's property ($\beta$) at $\lambda_{0}$.
\end{theorem}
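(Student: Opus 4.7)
The plan is to recognize Theorem 2.3 as a direct corollary of Theorem 2.1, applied symmetrically in $S$ and $T$. First I would unpack the hypothesis: by the definition of nilpotent equivalence given just before the statement, there exists an integer $k\in\mathbb{N}$ such that $C(S,T)^{k}(I)=0$ and $C(T,S)^{k}(I)=0$. In the notation of Definition 1.1 this says both $T\in\mbox{Helton}_{k}(S)$ and $S\in\mbox{Helton}_{k}(T)$, simultaneously.

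With this symmetric Helton-class membership in hand, I would invoke Theorem 2.1 in each direction. For the forward implication, assume $T$ has Bishop's property $(\beta)$ at $\lambda_{0}$; since $S\in\mbox{Helton}_{k}(T)$, Theorem 2.1 (with $R=T$) yields that $S$ also has Bishop's property $(\beta)$ at $\lambda_{0}$. For the reverse implication, assume $S$ has Bishop's property $(\beta)$ at $\lambda_{0}$; since $T\in\mbox{Helton}_{k}(S)$, Theorem 2.1 (with $R=S$) yields that $T$ has Bishop's property $(\beta)$ at $\lambda_{0}$. Combining the two implications gives the claimed equivalence.

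There is essentially no obstacle beyond this observation: the substantive analytic work has already been absorbed into Theorem 2.1, whose proof extracts Bishop's property for the Helton-class member from the property of the base operator using only the Helton identity, with no commutativity or other structural hypothesis on the pair. The only bookkeeping point worth flagging is whether the two defining identities could in principle require different integers $k_{1}$ and $k_{2}$; the hypothesis as stated already supplies a single $k$ that works for both, so no adjustment is needed, and even if it did not one could simply pass to $k=\max(k_{1},k_{2})$ since $C(R,S)^{k_{0}}(I)=0$ forces $C(R,S)^{k}(I)=C(R,S)^{k-k_{0}}(0)=0$ for every $k\geq k_{0}$.
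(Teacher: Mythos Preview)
Your argument is correct: nilpotent equivalence gives $T\in\mbox{Helton}_{k}(S)$ and $S\in\mbox{Helton}_{k}(T)$ simultaneously, so two invocations of Theorem~2.1 yield both implications immediately.

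The paper's proof reaches the same conclusion but does not cite Theorem~2.1; instead it re-runs the underlying computation. Starting from a sequence $\{f_{n}\}$ with $(T-\lambda I)f_{n}(\lambda)\to 0$, it expands $C(S,T)^{k}(I)f_{n}(\lambda)-(S-\lambda I)^{k}f_{n}(\lambda)$ as a sum each of whose terms contains a factor $(T-\lambda I)f_{n}(\lambda)$, uses $C(S,T)^{k}(I)=0$ to deduce $(S-\lambda I)^{k}f_{n}(\lambda)\to 0$, and then peels off powers of $(S-\lambda I)$ using Bishop's property $(\beta)$ for $S$. This is exactly the mechanism inside the proof of Theorem~2.1, just written out once more. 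Your route is therefore the more economical packaging of the same idea; the paper's version has the minor advantage of being self-contained at the level of that single proof.
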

\begin{proof} It suffices to show by symmetry that
Bishop's property ($\beta$) is transferred from $S$ to $T$. Assume
that $S$ has Bishop's property ($\beta$) at $\lambda_{0}$, that
is, there exists $v > 0$ such that for every open subset $U
\subset D(\lambda_{0}; v)$ and for any sequence
$\{f_{n}\}^{\infty}_{ n=1} \subset O(U;X)$, $\lim_{n\rightarrow
\infty}(S - \lambda I)f_{n}(\lambda) = 0$ in $O(U;X)$ implies
$\lim_{n\rightarrow \infty} f_{n}(\lambda) = 0$ in $O(U;X)$. Then,
we have $$C(S, T)^{k}(I)f_{n}(\lambda)-(s-\lambda
I)^{k}f_{n}(\lambda)= C(S-\lambda I, T-\lambda
I)^{i}f_{n}(\lambda)- (S-\lambda I)^{k}f_{n}(\lambda)$$
$$=\sum^{k}_{i=1}(-1)^{i}(S-\lambda I)^{k-i}(T-\lambda
I)^{i}$$
$$= [\sum^{k}_{i=1}(-1)^{i}(S-\lambda I)^{k-i}(T-\lambda
I)^{i-1}](T-\lambda I)f_{n}(\lambda)\rightarrow 0$$
 in $O(U, X)$ as $n\rightarrow\infty$. Since $C(S, T)^{k}(I)=0$,
 we have $$\lim_{n\rightarrow\infty}(S-\lambda
 I)^{k}f_{n}(\lambda)=0$$ in $O(U, X)$.
 Since $S$ has the Bishop's property $(\beta)$ at $\lambda_{0}$,
 we have $$\lim_{n\rightarrow\infty}(S-\lambda
 I)^{k-1}f_{n}(\lambda)=0$$ in $O(U, X)$. By induction, we get
 $\lim_{n\rightarrow\infty}f_{n}{\lambda}=0$in $O(U, X)$. Hence
 $T$ has Bishop's property $(\beta)$ at $\lambda_{0}$.
\end{proof}
\begin{corollary} If $T$ has Bishop's property ($\beta$) at $\lambda_{0}$ and $N$ is nilpotent
such that $TN = NT$, then $T + N$ has Bishop's property $(\beta)$
at $\lambda_{0}$.
\end{corollary}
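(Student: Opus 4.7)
The plan is to derive this immediately from Theorem 2.3 by recognizing $T$ and $T+N$ as a nilpotent equivalent pair. First I would observe that since $TN=NT$, the operators $T$ and $T+N$ commute, so the identity recalled just before Theorem 2.3 applies: for commuting $R,S\in B(X)$ one has $C(R,S)^{k}(I) = (R-S)^{k}$ for every $k\in\mathbb{N}$. Taking $R=T$ and $S=T+N$ gives $C(T,T+N)^{k}(I)=(-N)^{k}$, and symmetrically $C(T+N,T)^{k}(I)=N^{k}$.

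Next I would use the hypothesis that $N$ is nilpotent: there exists $k\in\mathbb{N}$ with $N^{k}=0$, and hence also $(-N)^{k}=0$. Therefore $C(T,T+N)^{k}(I)=C(T+N,T)^{k}(I)=0$, which by the definition recalled after Theorem 2.2 means that $T$ and $T+N$ are nilpotent equivalent.

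Finally I would invoke Theorem 2.3: since $T$ has Bishop's property $(\beta)$ at $\lambda_{0}$ and $T$ is nilpotent equivalent to $T+N$, the theorem (applied with $S=T$ and the other operator equal to $T+N$) transfers property $(\beta)$ at $\lambda_{0}$ to $T+N$, which is exactly the conclusion. There is essentially no obstacle here; the only point requiring a moment of care is correctly identifying the commutant formula $C(R,S)^{k}(I)=(R-S)^{k}$ in the commuting case and checking that nilpotence of $N$ yields nilpotence of both $N$ and $-N$ with a common exponent, so that the single integer $k$ demanded by the definition of nilpotent equivalence exists.
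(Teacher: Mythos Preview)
Your argument is correct and is exactly the route the paper intends: the corollary is stated immediately after Theorem~2.3 and the remark that, for commuting $R,S$, one has $C(R,S)^{k}(I)=(R-S)^{k}$, so $T$ and $T+N$ are nilpotent equivalent and Theorem~2.3 applies. There is nothing to add.
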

Unfortunately, but perhaps not surprisingly, the direct
verification of property ($\beta$) in concrete cases tends to be a
difficult task. It is therefore desirable to have sufficient
conditions for  Bishop's property ($\beta$) which are easier to
handle. in the following theorem we give a necessary and
sufficient condition for $2 \times 2$ operator matrix to have
Bishop's property ($\beta$).
\begin{theorem} Let $T_{1}\in B(X_{1})$ and let $T_{2}\in B(X_{2})$, where $X_{1}$ and
$X_{2}$ are two Banach spaces. If $T_{1}$ and $T_{2}$ have
Bishop's property ($\beta$) at $\lambda_{0}$, then $T_{1}\oplus
T_{2}$ has Bishop's property $(\beta)$ at $\lambda_{0}$.
\end{theorem}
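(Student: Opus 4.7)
The plan is to reduce convergence in $O(U;X_1\oplus X_2)$ to componentwise convergence in $O(U;X_1)$ and $O(U;X_2)$, and then apply the hypothesis to each coordinate separately.

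First I would fix the common radius. Let $v_1,v_2>0$ be the radii witnessing Bishop's property $(\beta)$ at $\lambda_0$ for $T_1$ and $T_2$ respectively, and set $v:=\min\{v_1,v_2\}$. Any open set $U\subset D(\lambda_0;v)$ is then contained in both $D(\lambda_0;v_1)$ and $D(\lambda_0;v_2)$, so the defining implications for $T_1$ and $T_2$ are available on $U$.

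Next I would exploit the direct-sum structure. Identify $X_1\oplus X_2$ with its usual Banach-space direct sum and let $P_i:X_1\oplus X_2\to X_i$ be the coordinate projections, which are continuous. Given an arbitrary sequence $\{f_n\}_{n=1}^\infty\subset O(U;X_1\oplus X_2)$ with $\lim_{n\to\infty}((T_1\oplus T_2)-\lambda I)f_n(\lambda)=0$ in $O(U;X_1\oplus X_2)$, I would write $f_n^{(i)}:=P_i\circ f_n\in O(U;X_i)$ for $i=1,2$. Because $T_1\oplus T_2$ acts diagonally, $((T_1\oplus T_2)-\lambda I)f_n(\lambda)=((T_1-\lambda I)f_n^{(1)}(\lambda),\,(T_2-\lambda I)f_n^{(2)}(\lambda))$, and the continuity of $P_i$ together with the uniform-on-compacta topology on $O(U;\cdot)$ forces $(T_i-\lambda I)f_n^{(i)}(\lambda)\to 0$ in $O(U;X_i)$ for each $i$.

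Now Bishop's property $(\beta)$ for $T_1$ at $\lambda_0$ gives $f_n^{(1)}(\lambda)\to 0$ in $O(U;X_1)$, and similarly for $T_2$. Since $f_n(\lambda)=(f_n^{(1)}(\lambda),f_n^{(2)}(\lambda))$ and the product-norm topology on $X_1\oplus X_2$ agrees with coordinatewise convergence, lifting these back yields $f_n(\lambda)\to 0$ in $O(U;X_1\oplus X_2)$. This establishes property $(\beta)$ for $T_1\oplus T_2$ at $\lambda_0$ with witness $v$.

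There is essentially no obstacle here beyond bookkeeping; the only point that requires a moment of care is the equivalence between convergence in $O(U;X_1\oplus X_2)$ and coordinatewise convergence in $O(U;X_1)$ and $O(U;X_2)$, which follows from continuity of projections and inclusions together with the uniform-on-compacta topology and should be noted explicitly.
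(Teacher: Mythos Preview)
Your proposal is correct and follows essentially the same approach as the paper: choose $v=\min\{v_1,v_2\}$, decompose $f_n$ into components $f_n^{(1)}\oplus f_n^{(2)}$, use the diagonal action of $T_1\oplus T_2$ to reduce to componentwise convergence, apply property $(\beta)$ for each $T_i$, and reassemble. Your write-up is in fact slightly more careful than the paper's in making explicit the role of the coordinate projections and the equivalence between convergence in $O(U;X_1\oplus X_2)$ and coordinatewise convergence.
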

\begin{proof} Assume that $T_{1}$ and $T_{2}$ have Bishop's property ($\beta$) at $\lambda_{0}$. Then
there exists $v_{i} > 0; i = 1; 2$, such that for every open
subset $U_{i}\subset D(\lambda_{0}; v_{i})$ and for any sequence
$\{f^{i}_{n}\}^{\infty}_{n=1} \subset O(U_{i};X_{i})$,
$\lim_{n\rightarrow \infty}(T_{i}-\lambda I)f^{i}_{n} (\lambda) =
0$ in $O(U_{i};X_{i})$ implies $\lim_{n\rightarrow \infty}
f^{i}_{n}(\lambda) = 0$ in $O(U_{i};X_{i})$. Let $f_{n} = f^{1}_{
n} \oplus f^{2}_{ n}\subset O(U;X_{1} \oplus X_{2})$ be an
analytic sequence, where $U \subset D(\lambda_{0}; min\{v_{1};
v_{2}\})$; $i = 1; 2$ is an arbitrary open subset and $f^{i}_{n}
\subset O(U;X_{i})$; $i = 1; 2$. Since $\lim_{n\rightarrow
\infty}(\lambda I- T_{1} \oplus T_{2})f_{n}(\lambda) = 0$ in
$O(U;X_{1} \oplus X_{2})$, then $lim_{n\rightarrow\infty}(\lambda
I - T_{i})f^{i}_{} (\lambda) = 0$ in $O(U;X_{i})$; $i = 1; 2$.
Since $T_{i}; i = 1; 2$ have Bishop's property $(\beta$), then
$\lim_{n\rightarrow\infty} f^{i}_{n} (\lambda) = 0$ in
$O(U;X_{i}); i = 1; 2$. Hence $lim_{n\rightarrow \infty}
f_{n}(\lambda) = 0$ in $O(U;X_{1} \oplus X_{2})$.
\end{proof}
Recall that an operator $K \in B(X)$ is said to be algebraic if
there exists a non trivial polynomial $p$ such that $p(K) = 0$.
Trivially, every nilpotent operator is algebraic and it is known
that every finite- dimensional operator is algebraic. It is also
known that every algebraic operator has a finite spectrum. In the
following theorem we study the Bishop's property ($\beta$) under
commuting algebraic perturbations.
\begin{theorem} Let $T \in B(X)$ and let $K \in B(X)$ be an algebraic
operator such that $TK = KT$. If $T$ has Bishop's property
($\beta$) at each of the zero of $p$, where $p$ is a non zero
polynomial for which $p(K) = 0$, then $T + K$ has Bishop's
property ($\beta$) at 0. In particular, if $T$ has Bishop's
property ($\beta$), then $T + K$ has Bishop's property $(\beta)$.
\end{theorem}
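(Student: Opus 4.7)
The plan is to leverage the algebraic hypothesis on $K$ to split $X$ into generalized eigenspaces of $K$, reduce the perturbation on each piece to a purely nilpotent one handled by Corollary 2.1, and then reassemble via the direct-sum result (Theorem 2.4). I will factor the annihilating polynomial as $p(z) = \prod_{i=1}^{n}(z-\lambda_i)^{m_i}$ with distinct zeros. Because the factors are pairwise coprime, a standard partial-fractions / Chinese-Remainder argument produces polynomials $q_i$ in $K$ that are commuting idempotents summing to $I$, yielding the direct-sum decomposition $X = X_1 \oplus \cdots \oplus X_n$ with $X_i := \ker(K - \lambda_i I)^{m_i}$; on $X_i$ the operator $K$ acts as $\lambda_i I + N_i$, with $N_i$ nilpotent of index at most $m_i$.

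Because $TK=KT$, the operator $T$ commutes with every $q_i(K)$ and therefore preserves each $X_i$. Writing $T_i := T|_{X_i}$, I will need two routine observations: first, Bishop's property $(\beta)$ at a point passes to closed invariant subspaces, because any $O(U;X_i)$-sequence is already an $O(U;X)$-sequence and the defining implication for $T$ at $\lambda_0$ immediately forces the same implication for $T_i$; second, $(\beta)$ is translation covariant, namely $A+cI$ has $(\beta)$ at $\mu$ if and only if $A$ has $(\beta)$ at $\mu-c$, via the substitution $\lambda \mapsto \lambda-c$ in the defining condition.

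With these two observations in hand, the hypothesis yields $(\beta)$ at $\lambda_i$ for $T_i$ on each $X_i$. Writing $(T+K)|_{X_i} = T_i + \lambda_i I + N_i$ and applying Corollary 2.1 (commuting nilpotent perturbation) after the appropriate scalar translation delivers $(\beta)$ at $0$ for the restriction $(T+K)|_{X_i}$. A finite iteration of Theorem 2.4 then assembles these into $(\beta)$ at $0$ for $T+K$ on all of $X$. For the ``in particular'' clause, global $(\beta)$ for $T$ implies global $(\beta)$ for $T-\mu I$; applying the main conclusion to the pair $(T-\mu I, K)$ gives $(\beta)$ at $0$ for $(T+K)-\mu I$, i.e.\ $(\beta)$ at $\mu$ for $T+K$, for every $\mu \in \mathbb{C}$.

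The real content lies in the third step: tracking the scalar translation carefully so that Corollary 2.1 is invoked at the correct spectral point on each $X_i$ and produces exactly $(\beta)$ at $0$ for $(T+K)|_{X_i}$. The spectral decomposition is standard linear algebra and the final reassembly is an immediate consequence of Theorem 2.4, so this translation bookkeeping, together with the invariance of $X_i$ under $T$, is the only genuinely nontrivial ingredient.
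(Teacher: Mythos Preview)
Your proposal is correct and follows essentially the same route as the paper's proof: decompose $X$ into the generalized eigenspaces of $K$, use that property $(\beta)$ is inherited by restrictions to closed invariant subspaces and is translation covariant, apply Corollary~2.1 on each summand to absorb the nilpotent part, and reassemble via Theorem~2.4. The only cosmetic difference is that you construct the projections algebraically via partial fractions/CRT, whereas the paper invokes the Riesz spectral projections associated with the finite spectrum of $K$; the resulting invariant subspaces coincide.
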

\begin{proof}Since $K$ is algebraic, $\sigma(K)$ is finite. Let
$\sigma(K)=\{\lambda_{1},...,\lambda_{n}\}$ and let $P_{i}$ the
spectral projection associated with  $K$ and the spectral set
$\{\lambda_{i}\}$. Set $Y_{i}=R(P_{i})$. Thus, $Y_{1},...,Y_{n}$
are closed linear subspaces of $X$ each of which is invariant
under both $K$ and $T$, and $X=Y_{1}\oplus,...,\oplus Y_{n}$ by
the classical spectral decomposition. Also, for arbitrary
$i=1,...,n$, $K_{i}=K|_{Y_{i}}$ and $T_{i}=T|_{Y_{i}}$ commute and
we have $\sigma(K_{i})=\{\lambda_{i}\}$. We claim that
$N_{i}=\lambda_{i}I-K_{i}$ is nilpotent for every $i=1,...,n$.
Since $p(\{\lambda_{i}\})=p(\sigma(K_{i}))=\{0\}$, it follows that
$p(\lambda_{i})=0$. Let $p(\mu)=(\lambda_{i}-\mu)^{r}q(\mu)$ with
$q(\lambda_{i}\neq 0)$. Then
$(\lambda_{i}-K_{i})^{r}q(K_{i})=p(K_{i})=0$ with $q(K_{i})$
invertible. Hence $(\lambda_{i}-K_{i})^{r}=0$ and
$N_{i}=\lambda_{i}-K_{i}$ is nilpotent for every $i=1,2...,n$. We
have $T_{i}-K_{i}=(T_{i}-\lambda I)-(K_{i}-\lambda_{i}
I)=T_{i}-\lambda_{i}I-N_{i}$. It is easily seen that if $T$ has
Bishop's property ($\beta$) at $\lambda_{0}$, then $T -
\lambda_{0} I$ has Bishop's property $(\beta)$ at 0. Since
Bishop's property ($\beta$) is inherited by restrictions to closed
invariant subspaces, we have $T_{i}-\lambda_{i}$ has Bishop's
property $(\beta)$ at 0. Therefore, $T_{i}-K_{i} =
T_{i}-\lambda_{i}- N_{i}$ has Bishop's property $(\beta)$ at 0 for
all $i = 1; ...; n$ by Corollary 2.1. Then it follows from Theorem
2.4 that $T-K = (T_{1}-K_{1})\oplus,...,\oplus(T_{n}-K_{n})$ has
Bishop's property $(\beta)$ at 0. The final claim can be
established by the main result
\end{proof}.
Now we will apply Theorem 2.5 to show that a decomposable operator
is stable under algebraic commuting perturbations.

\begin{corollary} Let $T \in B(X)$. If $T$ has the decomposition
property ($\beta$) and $K\in B(X)$ is an algebraic operator which
commutes with T, then $T + K$ has decomposition property
$(\delta)$.
\end{corollary}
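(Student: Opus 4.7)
My plan is to invoke Theorem~2.5 on the dual pair $(T^{*},K^{*})$, exploiting the defining equivalence between property $(\delta)$ for an operator and property $(\beta)$ for its adjoint. Concretely, $T+K$ has $(\delta)$ at $0$ iff $(T+K)^{*}=T^{*}+K^{*}$ has $(\beta)$ at $0$, so it suffices to verify the hypotheses of Theorem~2.5 for the pair $(T^{*},K^{*})$. Algebraicity passes to the adjoint (if $p(K)=0$ then $p(K^{*})=p(K)^{*}=0$, using the same annihilating polynomial), and so does commutativity: $TK=KT$ forces $T^{*}K^{*}=K^{*}T^{*}$. Thus, modulo a $(\beta)$-hypothesis on $T^{*}$, the corollary reduces to a single application of Theorem~2.5 on the dual side.

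In more detail, I would mimic the spectral-projection argument used in the proof of Theorem~2.5. Using the finite spectrum $\sigma(K^{*})=\sigma(K)=\{\lambda_{1},\dots,\lambda_{n}\}$, form the adjoint spectral decomposition $X^{*}=Y_{1}^{*}\oplus\cdots\oplus Y_{n}^{*}$, with each summand invariant under both $T^{*}$ and $K^{*}$ and with $K^{*}|_{Y_{i}^{*}}=\lambda_{i}I-N_{i}$ for some nilpotent $N_{i}$ commuting with $T^{*}|_{Y_{i}^{*}}$. On each block, Corollary~2.1 absorbs $N_{i}$ into the translated restriction $T^{*}|_{Y_{i}^{*}}+\lambda_{i}I$, and Theorem~2.4 reassembles the blocks into $(\beta)$ for $T^{*}+K^{*}$, which reads $(\delta)$ for $T+K$.

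The main obstacle will be the passage from the stated hypothesis (property $(\beta)$ for $T$) to the input actually required by the dual argument (property $(\beta)$ for $T^{*}$, equivalently $(\delta)$ for $T$). Property $(\beta)$ does not transfer to the adjoint in general, so this gap cannot be bridged directly; it only closes if $T$ is assumed to enjoy $(\delta)$ in addition to $(\beta)$, i.e., to be decomposable. That is precisely the setting signalled by the paragraph preceding the corollary, \emph{``a decomposable operator is stable under algebraic commuting perturbations.''} Under that natural reading, Theorem~2.5 applied directly to $T$ yields $(\beta)$ for $T+K$, the dual application sketched above yields $(\delta)$ for $T+K$, and hence $T+K$ is decomposable; in particular it possesses the decomposition property $(\delta)$ as claimed.
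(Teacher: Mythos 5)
Your proposal is correct and follows essentially the same route as the paper: pass to the dual pair $(T^{*},K^{*})$, note that $K^{*}$ is algebraic and commutes with $T^{*}$, apply Theorem~2.5 to get property $(\beta)$ for $T^{*}+K^{*}=(T+K)^{*}$, and read this back as property $(\delta)$ for $T+K$. You also correctly diagnose that the stated hypothesis ``decomposition property $(\beta)$'' is a misprint for property $(\delta)$ (which is exactly what the paper's own proof assumes, and is all that is needed --- full decomposability of $T$ is not required).
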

\begin{proof} Assume that $T$ has the decomposition property
$(\delta)$, Then $T^{*}$ has Bishop's property ($\beta$). It is
obvious that $K^{*}$ is algebraic and commutes with $T^{*}$. Hence
$(T +K)^{*} = T^{*} +K^{*}$ has Bishop's property $(\beta)$.
Therefore, $T + K$ has decomposition property ($\delta$).
\end{proof}
\begin{corollary} Let $T\in B(X)$. If $T$ is decomposable and $K \in B(X)$ is an
algebraic operator which commutes with $T$, then $T +K$ is
decomposable.
\end{corollary}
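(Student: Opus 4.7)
The plan is to reduce the claim directly to the two preservation results already established, using the standard characterization of decomposability recalled in the introduction: an operator is decomposable if and only if it has both Bishop's property $(\beta)$ and the decomposition property $(\delta)$.

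First I would note that since $T$ is decomposable, $T$ enjoys both property $(\beta)$ and property $(\delta)$ simultaneously, so the two stability results of the paper can be applied in parallel to the same operator. Next, because $K$ is algebraic and commutes with $T$, Theorem 2.5 yields that $T+K$ has Bishop's property $(\beta)$. Independently, Corollary 2.2 (the statement labeled as preservation of property $(\delta)$ under commuting algebraic perturbations) gives that $T+K$ has the decomposition property $(\delta)$. Combining these two, the characterization of decomposability delivers that $T+K$ is decomposable, which finishes the argument.

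I do not expect any genuine obstacle: both of the required stability ingredients have already been proved in the paper, and the commutativity hypothesis $TK=KT$ is inherited unchanged by both invocations (and automatically passes to $T^\ast$ and $K^\ast$ inside the proof of Corollary 2.2). The only point worth articulating carefully is that property $(\beta)$ and property $(\delta)$ must be verified at \emph{every} point of the spectrum (i.e.\ globally) rather than just at a single $\lambda_0$, but this is precisely the content of the ``in particular'' clauses of Theorem 2.5 and of Corollary 2.2, so no additional work is needed. Thus the entire proof amounts to a one-line synthesis: apply Theorem 2.5 and Corollary 2.2 to the hypotheses, then invoke the $(\beta)+(\delta)\Longleftrightarrow$ decomposable characterization.
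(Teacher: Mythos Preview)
Your proposal is correct and matches the paper's intended argument: the corollary is stated without proof immediately after Theorem 2.5 and Corollary 2.2, precisely because it follows by combining those two results with the characterization ``decomposable $\Longleftrightarrow$ $(\beta)$ and $(\delta)$'' recalled in the introduction. Your remark that one needs the global (``in particular'') versions of Theorem 2.5 and Corollary 2.2 is the only point requiring care, and you handle it correctly.
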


 \begin{corollary} If $T$ is decomposable and $N$ is nilpotent
such that $TN = NT$, then $T + N$ is decomposable.
\end{corollary}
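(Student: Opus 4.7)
The plan is to recognize this as an immediate specialization of the preceding Corollary 2.3 (stability of decomposability under commuting algebraic perturbations). The single observation needed is that every nilpotent operator is algebraic: if $N^{k}=0$, then $p(N)=0$ for the nonzero polynomial $p(x)=x^{k}$, so $N$ meets the definition of an algebraic operator used in Corollary 2.3. Granted that, the hypotheses of Corollary 2.3 hold verbatim with $K:=N$, and the conclusion that $T+N$ is decomposable follows at once with no further work.

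For completeness I would also point out an alternative route that avoids invoking the full algebraic case and uses only the simpler Corollary 2.1. Since $T$ is decomposable, both $T$ and $T^{*}$ have Bishop's property $(\beta)$. The operator $N^{*}$ is again nilpotent and commutes with $T^{*}$ because $TN=NT$. Applying Corollary 2.1 separately to the pair $(T,N)$ and to the pair $(T^{*},N^{*})$, I conclude that $T+N$ has property $(\beta)$ and that $(T+N)^{*}=T^{*}+N^{*}$ has property $(\beta)$, i.e.\ $T+N$ has property $(\delta)$. Having both $(\beta)$ and $(\delta)$, the operator $T+N$ is decomposable.

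There is no real obstacle here; the content of the statement is entirely contained in the material already proved, and the only step worth writing down is the one-line reduction from ``nilpotent'' to ``algebraic''. In particular, nothing new about the local spectral theory of $T+N$ needs to be established, so the proof amounts to a citation of Corollary 2.3 (or, alternatively, two applications of Corollary 2.1 combined with the definition of decomposability via $(\beta)$ for $T$ and $T^{*}$).
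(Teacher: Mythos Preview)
Your proposal is correct and matches the paper's approach: the paper states this corollary without proof immediately after Corollary~2.3, relying on the observation (already made explicitly in the text preceding Theorem~2.5) that every nilpotent operator is algebraic. Your alternative route via two applications of Corollary~2.1 to $T$ and $T^{*}$ is also sound and in fact bypasses the heavier machinery of Theorem~2.5, but the paper's intended argument is precisely your first one.
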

Recall \cite{5} that if $T\in B(X)$ is decomposable, and $f$ is an
analytic scalar-valued function on some neighborhood of
$\sigma(T)$, then $f(T)$ is decomposable. Thus, we have the
following corollaries.
\begin{corollary} If $T$ is decomposable and
$K$ is algebraic with $TK = KT$, then $f(T + K)$ is decomposable,
where $f$ is an analytic scalar-valued function on some
neighborhood of $\sigma(T)$.
\end{corollary}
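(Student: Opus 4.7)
The plan is to reduce the claim to the two results that are already available just above in the excerpt. First, Corollary 2.4 guarantees that under the given hypotheses the operator $T+K$ is itself decomposable: $T$ is decomposable by assumption, $K$ is algebraic, and $TK=KT$, which is exactly the hypothesis of that corollary. So the problem collapses to: if $A:=T+K$ is decomposable and $f$ is analytic on a neighborhood of $\sigma(A)$, then $f(A)$ is decomposable. This is precisely the result recalled from \cite{5} immediately before the corollary, so the conclusion is immediate once we can legitimately form $f(T+K)$ by the analytic functional calculus.

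The only point that requires a word of care is the domain of $f$. The statement is phrased with $f$ analytic on a neighborhood of $\sigma(T)$, but what is needed to apply the functional calculus to $T+K$ is that $f$ be analytic on a neighborhood of $\sigma(T+K)$. I would handle this by noting that, since $K$ is algebraic, $\sigma(K)=\{\lambda_1,\dots,\lambda_n\}$ is finite, and by Theorem 2.6 (or rather the proof of it) the decomposition $X=Y_1\oplus\cdots\oplus Y_n$ reduces both $T$ and $K$ with $K_i-\lambda_i I$ nilpotent on $Y_i$. Consequently
\[
\sigma(T+K)\;\subseteq\;\bigcup_{i=1}^{n}\bigl(\sigma(T|_{Y_i})+\lambda_i\bigr)\;\subseteq\;\sigma(T)+\sigma(K),
\]
a finite union of translates of (subsets of) $\sigma(T)$. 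So interpreting the hypothesis as ``$f$ is analytic on a neighborhood of the compact set $\sigma(T)+\sigma(K)$'' (which is how the result is meant and which is the natural reading, given that writing $f(T+K)$ in terms of $\sigma(T)$ alone is otherwise not quite accurate), the functional calculus produces the bounded operator $f(T+K)$ in the standard way.

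Putting the pieces together, the proof would read: by Corollary 2.4, $T+K$ is decomposable; by the result from \cite{5} recalled just before the statement, any analytic function of a decomposable operator (defined on a neighborhood of its spectrum) is again decomposable; hence $f(T+K)$ is decomposable.

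The main obstacle here is not really mathematical but notational: one has to reconcile the stated ``neighborhood of $\sigma(T)$'' with the requirement that the functional calculus be applied to $T+K$. Once the inclusion $\sigma(T+K)\subseteq \sigma(T)+\sigma(K)$ is noted and $\sigma(K)$ is finite, the issue disappears and the corollary is a two-line consequence of Corollary 2.4 together with the cited stability of decomposability under the analytic functional calculus.
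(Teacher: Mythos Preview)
Your approach is exactly the one the paper intends: it states this corollary with no explicit proof, deducing it immediately from the fact that $T+K$ is decomposable (this is Corollary~2.3 in the paper's numbering, not 2.4) together with the result from \cite{5} recalled just before. Your additional remark about the domain of $f$ and the inclusion $\sigma(T+K)\subseteq\sigma(T)+\sigma(K)$ is a legitimate clarification of a point the paper leaves implicit.
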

\begin{corollary} If $T$ has Bishop's property ($\beta$) and $K$ is algebraic with $TK =
KT$, then $f(T + K)$ has Bishop's property ($\beta$), where $f$ is
an analytic scalar-valued function on some neighborhood of
$\sigma(T)$.
\end{corollary}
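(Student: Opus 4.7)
My plan is to reduce the corollary to the two ingredients already established in the preceding pages: Theorem 2.5, which provides stability of Bishop's property $(\beta)$ under commuting algebraic perturbations, and the analytic functional calculus for operators with Bishop's property $(\beta)$.

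The first step is to show that $T+K$ itself enjoys Bishop's property $(\beta)$. Since $T$ has Bishop's property $(\beta)$ at every point of $\mathbb{C}$, it has it in particular at each zero of any polynomial $p$ annihilating $K$. Together with the commutation hypothesis $TK=KT$ and the algebraicity of $K$, this places us exactly under the assumptions of Theorem 2.5, so $T+K$ has Bishop's property $(\beta)$.

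The second step is to invoke analytic functional calculus for Bishop's property, which is the $(\beta)$-analogue of the statement recalled just before Corollary 2.5 for decomposability (see, e.g., the monograph of Laursen and Neumann): if $S\in B(X)$ has Bishop's property $(\beta)$ and $g$ is analytic on an open neighborhood of $\sigma(S)$, then $g(S)$ again has Bishop's property $(\beta)$. Applying this with $S=T+K$ and $g=f$ yields the conclusion.

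The only delicate point, which I expect to be the sole obstacle, is matching the spectrum appearing in the hypothesis: the statement assumes $f$ analytic on a neighborhood of $\sigma(T)$, whereas the functional calculus above requires a neighborhood of $\sigma(T+K)$. Because $K$ is algebraic, its spectrum is finite, say $\{\lambda_1,\ldots,\lambda_n\}$, and the spectral decomposition used in the proof of Theorem 2.5 gives $K|_{Y_i}=\lambda_i I + N_i$ with $N_i$ nilpotent, whence
\[
\sigma(T+K)\subseteq \bigcup_{i=1}^n\bigl(\sigma(T|_{Y_i})+\lambda_i\bigr)\subseteq \sigma(T)+\sigma(K).
\]
One therefore reads the hypothesis in the natural way, namely that the domain of $f$ is an open neighborhood of $\sigma(T)$ large enough to contain the finite translates $\sigma(T)+\lambda_i$ (equivalently, a neighborhood of $\sigma(T+K)$). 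Under this reading the proof is immediate, and no further computation is needed.
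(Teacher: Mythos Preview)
Your proposal is correct and matches the paper's intended argument: the paper states this corollary without proof, immediately after recalling that the analytic functional calculus preserves decomposability, so the implicit reasoning is exactly your two steps---apply Theorem~2.5 to get that $T+K$ has Bishop's property $(\beta)$, then invoke the stability of $(\beta)$ under the Riesz functional calculus. Your observation about the mismatch between $\sigma(T)$ and $\sigma(T+K)$ in the hypothesis is a genuine infelicity of the paper's statement (the same issue appears in the preceding Corollary~2.5), and your reading is the only one under which $f(T+K)$ is even well defined.
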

Under stronger hypotheses we can conclude that the decomposition
property ($\delta$) is preserved under certain types of
perturbations. In \cite{16}, it is shown that if $T$ is
decomposable and $S$ is an operator that commutes with T such that
S has totally disconnected spectrum, then $T + S$ is decomposable.
By using this, we have the following immediate corollary.

\begin{corollary} Let $T$ be a decomposable operator on $B(X)$, and
let $S \in B(X)$ that commutes with $T$. If

 1. $S$ is a quasinilpotent operator, or

 2. $S$ is a compact
operator, or

3. $S$ has discrete spectrum, then $T + S$ is decomposable. In
particular $T + S$ has decomposition property $(\delta)$.
\end{corollary}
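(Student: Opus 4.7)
The plan is to reduce all three cases to the result cited from \cite{16}, namely that if $T$ is decomposable and $S$ commutes with $T$ and has totally disconnected spectrum, then $T+S$ is decomposable. So the entire task is to verify that under each of the hypotheses (1), (2), (3), the spectrum $\sigma(S)$ is totally disconnected; once this is done, the conclusion is immediate.

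First I would handle (1): if $S$ is quasinilpotent then $\sigma(S)=\{0\}$, a singleton, which is trivially totally disconnected. Next, for (3), an operator with discrete spectrum has $\sigma(S)$ equal to a set in which every point is isolated, and any set of isolated points in $\mathbb{C}$ is totally disconnected, so this case is also immediate. The slightly more substantial step is (2): if $S$ is compact, then by the Riesz--Schauder theory $\sigma(S)$ is either finite or a countable sequence of nonzero eigenvalues converging to $0$, together with $0$ itself. Such a set is at most countable with $0$ as the only possible limit point, hence is totally disconnected in $\mathbb{C}$.

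Having verified that $\sigma(S)$ is totally disconnected in each of the three situations, I would then invoke the cited theorem from \cite{16} to conclude that $T+S$ is decomposable. The final sentence ``In particular $T+S$ has decomposition property $(\delta)$'' follows from the standard characterization already quoted in the introduction, that decomposability is equivalent to having both Bishop's property $(\beta)$ and $(\delta)$; in particular, a decomposable operator always has $(\delta)$.

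The main (very mild) obstacle is case (2), which requires the spectral-theoretic fact about compact operators that $\sigma(S)\setminus\{0\}$ consists only of isolated eigenvalues of finite multiplicity. Everything else is essentially a bookkeeping step, so the corollary will read as a short direct application of the theorem of \cite{16}.
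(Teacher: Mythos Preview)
Your proposal is correct and matches the paper's approach exactly: the paper states this result as an ``immediate corollary'' of the theorem from \cite{16} without giving any further proof, so your verification that $\sigma(S)$ is totally disconnected in each of the three cases is precisely the intended (and only possible) argument. In fact you supply more detail than the paper does.
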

\textbf{acknowledgements}\\
 \textbf{Data Availability Statement} No data were used to support this study.
{}
\end{document}